  \numberwithin{equation}{section}
\def\BState{\State\hskip-\ALG@thistlm}
\newtheorem{theorem}{Theorem}
\newtheorem{lemma}[theorem]{Lemma}
\newtheorem{remark}[theorem]{Remark}
\newcommand{\BALD}{\begin{aligned}}
\newcommand{\EALD}{\end{aligned}}
\newcommand{\BALDS}{\begin{aligned*}}
\newcommand{\EALDS}{\end{aligned*}}
\newcommand{\BCAS}{\begin{cases}}
\newcommand{\ECAS}{\end{cases}}
\newcommand{\BEAS}{\begin{eqnarray*}}
\newcommand{\EEAS}{\end{eqnarray*}}
\newcommand{\BEQ}{\begin{equation}}
\newcommand{\EEQ}{\end{equation}}
\newcommand{\BIT}{\begin{itemize}}
\newcommand{\EIT}{\end{itemize}}
\newcommand{\BMAT}{\begin{bmatrix}}
\newcommand{\EMAT}{\end{bmatrix}}
\newcommand{\BNUM}{\begin{enumerate}}
\newcommand{\ENUM}{\end{enumerate}}
\newcommand{\BA}{\begin{array}}
\newcommand{\EA}{\end{array}}
\newcommand{\real}{\mathbb{R}}
\DeclareMathOperator*{\argmin}{\arg\min}
\newcommand{\diag}{\mathop{\mathbf{diag}}}
\DeclareMathOperator{\sign}{sign}
\newcommand{\hsigma}{\widehat{\sigma}}
\newcommand{\Ee}{\mathbb{E}}
\newcommand{\Pp}{\mathbb{P}}
\newcommand{\Q}{\mathbb{Q}}
\newcommand{\M}{\mathbb{M}}
\newcommand{\hz}{\hat{z}}
\newcommand{\hbeta}{\hat{\beta}}
\newcommand{\bX}{X}
\providecommand{\argmin}{\mathop\mathrm{arg min}}
\providecommand{\diag}{\mathop\mathrm{diag}}
\providecommand{\sign}{\mathop\mathrm{sign}}
\def\P{\mathbb{P}} 
\def\bar{\overline}
\begin{document}

\begin{frontmatter}
\title{Selective inference with unknown variance via the square-root LASSO}
\runtitle{Selective inference with unknown variance}

\begin{aug}
\author{\fnms{Xiaoying} \snm{Tian}\ead[label=e1]{xtian@stanford.edu}},
\and
\author{\fnms{Joshua R.} \snm{Loftus}\ead[label=e2]{joftius@stanford.edu}}, 
\and
\author{\fnms{Jonathan E.} \snm{Taylor}\ead[label=e3]{jonathan.taylor@stanford.edu}}

\runauthor{Tian et al.}

\affiliation{Stanford University}

\address{Department of Statistics\\
Stanford University\\
Stanford, California\\
\printead{e1}, \\
\printead*{e2}}
\end{aug}

\begin{abstract}
There has been much recent work on inference after model selection when the noise level is known, 
however, $\sigma$ is rarely known in practice and its estimation is difficult in high-dimensional settings.
In this work we propose using the square-root LASSO (also known as the scaled LASSO) to perform selective inference for the coefficients and the noise level simultaneously. The square-root LASSO
has the property that choosing a reasonable tuning parameter is scale-free, namely it does not depend on the noise level
in the data.
We provide valid p-values and confidence intervals for the coefficients after selection,
and estimates for model specific variance. Our estimates perform better than other estimates of $\sigma^2$ in simulation. 
\end{abstract}

\begin{keyword}[class=AMS]
\kwd[Primary ]{62F03}
\kwd{62J07}
\kwd[; secondary ]{62E15}
\end{keyword}

\begin{keyword}
\kwd{lasso}
\kwd{square root lasso}
\kwd{confidence interval}
\kwd{hypothesis test}
\kwd{model selection}
\end{keyword}

\end{frontmatter}

\section{Introduction}
\label{sec:intro}

Selective inference differs from classical inference in regression.
Given $y \in \real^n, X \in \real^{n \times p}$ we first
choose a model by selecting some subset $E$ of the columns of $X$.
Denoting this model submatrix $X_E$, we proceed with the related
regression model
\begin{equation} 
y = X_E\beta_E + \epsilon, \epsilon \sim N(0, \sigma^2_EI),
\label{eq:model}
\end{equation}
and conduct the usual types of inference considered in regression
such as hypothesis tests and confidence intervals.

Most previous literature \citep{taylor2013tests,lee2013exact,spacings}
assumes that $\sigma^2_E$ is known. This is problematic for two reasons: first, it is
almost never known in practice; second, the noise level $\sigma_E$ as posited above
is specific to the model we choose. As we choose the variables $E$ with data, it is
not generally easy to get an independent estimate of $\sigma_E$. 
In this work  we propose a method that will treat $\sigma_E$ as one of the
parameters for inference and adjust for selection.

Our method is both valid in theory and practice. We illustrate the latter through
comparisons of estimates of $\sigma_E^2$ with \cite{sun2012scaled, reid2013study},
and FDR control and power with \cite{barber2016knockoff}.

\subsection{The Square-root LASSO and its tuning parameters}

The selection procedure we use is based on the square-root LASSO \cite{sqrt_lasso}, 
which in turn is known to be equivalent to the scaled LASSO \cite{sun2012scaled}. 
\begin{equation}
\label{eq:sqrt_lasso}
\hat{\beta}_{\lambda} = \argmin_{\beta \in \real^p}  \|y-X\beta\|_2 + \lambda \cdot \|\beta\|_1.
\end{equation}
The square-root LASSO is a modification of the LASSO \cite{tibshirani1996regression}:
\begin{equation}
\label{eq:lasso}
\tilde{\beta}_{\gamma} = \argmin_{\beta \in \real^p} \frac{1}{2} \|y-X\beta\|^2_2 + \gamma \cdot \|\beta\|_1.
\end{equation}

The first advantage of using square-root LASSO is the convenience in choosing $\lambda$. 
For the LASSO, a good choice of $\gamma$ depends on the noise variance $\sigma_E$, \cite{negahban_unified_2012} 
\begin{equation}
\label{eq:gamma}
\gamma = 2 \cdot \Ee (\|X^T\epsilon\|_{\infty}), \qquad \epsilon \sim N(0, \sigma^2_E I)
\end{equation}
In practice, we might consider some multiple other than 2. As $\lambda_1=\lambda_1(X,y)$, the first
knot on the solution path of \eqref{eq:lasso}, is equal to $\|X^Ty\|_{\infty}$, the choice
of tuning parameter can be viewed as some multiple
of the expected threshold at which noise with variance $\sigma^2_E$ would
enter the LASSO path.

Unlike the LASSO, an analogous choice of tuning parameter $\lambda$ for square-root LASSO
does not depend on $\sigma_E$, 
\begin{equation}
\label{eq:lambda}
\lambda = \kappa \cdot \Ee \left(\frac{\|X^T\epsilon\|_{\infty}}{\|\epsilon\|_2} \right), \qquad \epsilon \sim N(0, I)
\end{equation}
for some unitless $\kappa$. Below, we typically use $\kappa \leq 1$.

Any spherically symmetric distribution yields the same choice of $\lambda$
which makes the above choice of tuning parameter independent of the noise level
$\sigma_E$. That $\lambda$ is independent of $\sigma_E$ also
follows from the convex program \eqref{eq:sqrt_lasso} since the first term and
the second term in the optimization objective are of the same order in
$\sigma_E$.

\subsection{Model selection by square-root LASSO}

Both the LASSO and square-root LASSO can be viewed as model selection 
procedures. In what follows we make the assumption that the columns of
$X$ are in general position to ensure uniqueness of solutions \cite{lassounique}.
We define the selected model of the square-root LASSO as
\begin{equation}
\label{eq:selected_model}
\hat{E}_{\lambda}(y) = \left\{j: \hat{\beta}_{j,\lambda}(y) \neq 0 \right\}
\end{equation}
and the selected signs
\begin{equation}
\label{eq:selected_signs}
\hat{z}_{E,\lambda}(y) = \text{sign}\left\{\hat{\beta}_{j,\lambda}(y): \hat{\beta}_{j,\lambda}(y) \neq 0 \right\}.
\end{equation}
To ease notation, we use the shorthands 
\begin{equation}
\label{eq:shorthand}
\hat{\beta}(y) = \hat{\beta}_{\lambda}(y), \quad
\hat{E} = \hat{E}_{\lambda}(y), \quad
\hat{z}_{\hat{E}} =\hat{z}_{E,\lambda}(y).
\end{equation}

In Section \ref{sec:sqrt_lasso} we investigate the KKT (Karush-Kuhn-Tucker) conditions for the program \eqref{eq:sqrt_lasso}. As in the LASSO case,
the KKT conditions provide the basic description for the {\em selection event}
on which selective inference is based.

\subsection{Selective inference}\label{sec:selective}

The selective inference framework described in \cite{optimal_selective} attempts to
control the {\em selective type I error rate} \eqref{eq:selective_type_1_error}. 
This is defined in terms of a pair of a model and an associated hypothesis $(M,H_0)$, and 
a critical function $\phi_{(M,H_0)}$ to test 
$H_0 \subset M$ vs. $H_a = M \setminus H_0$.
The selective type I error is
\begin{equation}
\Pp_{M,H_0}( \text{reject $H_0$}\ |\ \text{$(M, H_0)$ selected} ) 
= \Ee_{M,H_0}\left(\phi_{(M,H_0)}(y)\ | (M, H_0) \in \hat{\cal Q}(y)\right),
\label{eq:selective_type_1_error}
\end{equation}
where we use the notation $\hat{\cal Q}$ to denote the model selection procedure that 
depends on the data. 
The process $\hat{\cal Q}$ determines a map taking a distribution $\Pp \in M$ to
\begin{equation}
\label{eq:selective:dbn}
\Pp\left( \ \cdot \  \big \vert  (M, H_0) \in \hat{\cal Q}\right).
\end{equation}
We call such distributions {\em selective distributions}. Inference is carried out
under these distributions.

In this paper, we consider the models and hypotheses 
\begin{equation}
\label{eq:selected:unknown}
M = M_{u,E} = \left\{N(X_E \beta_E, \sigma^2_E I): \beta_E \in \real^E, \sigma^2_E \geq 0\right\}, \qquad
H_0 = \left\{\beta_E: \beta_{j,E}=0\right\},\quad j \in E.
\end{equation}

The setup of selective inference is such that we select the model $M_{u,E}$ based on  
a set of variables $\hat{E}$ selected by the data as in \eqref{eq:selected_model}. More specifically, 
\begin{equation}
\label{eq:questions_lasso}
\hat{\cal Q}(y) = \hat{\cal Q}_{u,\lambda}(y) = \left\{ (M_{u,\hat{E}}, \{\beta_{\hat{E}}:\beta_{j,\hat{E}} = 0 \}): j \in \hat{E} \right\}.
\end{equation}

One of the take-away messages from \cite{optimal_selective} is that
there is a concrete procedure to form valid tests that controls \eqref{eq:selective_type_1_error} 
when $M$ is an exponential family, and the null 
hypotheses 
can be expressed in terms of a one-parameter subfamily of the natural parameter space of the exponential family.
More specifically, we have the following lemma.

\begin{lemma}
\label{lem:suff}
For the regression model \eqref{eq:selected:unknown} with unknown $\sigma^2_E$, $(X_E^T y, \|y^2\|)$
are the sufficient statistics for the natural parameters $\big(\frac{\beta_E}{\sigma_E^2}, \frac{1}{\sigma_E^2}\big)$.
Furthermore, to test hypothesis $H_0:~ \frac{\beta_{j,E}}{\sigma_E^2} = \theta$, for any $j \in E$, we only need to
consider the law
\begin{equation}
\label{eq:law:coef}
{\cal L}_{(M_{u, E}, H_0)} \left(X_j^T y \mid \|y\|^2, X_{E \backslash j}^T y, (M_{u, E}, H_0) \in \hat{{\cal Q}}(y)\right).
\end{equation}

Similarly, the following law can be used for inference of $\sigma_E^2$, 
\begin{equation}
\label{eq:law:variance}
{\cal L}_{(M_{u, E}, \sigma_E^2)} \left(\|y\|^2 \mid X_{E}^T y, (M_{u, E}, \sigma_E^2) \in \hat{{\cal Q}}(y)\right).
\end{equation}
\end{lemma}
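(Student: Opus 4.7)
The plan is to verify the exponential family structure directly and then appeal to the generic selective inference recipe of \cite{optimal_selective} which, once the sufficient statistics are identified, specifies exactly which conditional law to use.

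First I would expand the Gaussian density. Under $y \sim N(X_E \beta_E, \sigma_E^2 I)$, we have
\begin{equation*}
\|y - X_E \beta_E\|^2 = \|y\|^2 - 2 \beta_E^T X_E^T y + \beta_E^T X_E^T X_E \beta_E,
\end{equation*}
so the density factors as
\begin{equation*}
\exp\!\left( \Big\langle \tfrac{\beta_E}{\sigma_E^2},\, X_E^T y \Big\rangle - \tfrac{1}{2\sigma_E^2}\|y\|^2 - A(\beta_E, \sigma_E^2) \right) \cdot h(y),
\end{equation*}
which exhibits $M_{u,E}$ as an exponential family with natural parameter $\eta = (\beta_E/\sigma_E^2,\, -1/(2\sigma_E^2))$ and sufficient statistic $T(y) = (X_E^T y,\, \|y\|^2)$. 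Up to the harmless factor of $-1/2$, this is exactly the claim of the first sentence.

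Next I would invoke the general one-parameter selective testing device. For an exponential family, to test a one-parameter subfamily $H_0: \eta_j = \theta$ with selective Type I control, \cite{optimal_selective} shows that it suffices to condition on (a) the complementary sufficient statistics $T_{-j}$, which eliminate the nuisance components of $\eta$, and (b) the selection event $\{(M,H_0) \in \hat{\cal Q}(y)\}$, and then base the test on the conditional law of $T_j$. Applied to $H_0: \beta_{j,E}/\sigma_E^2 = \theta$, the nuisance coordinates of $\eta$ are $\{\beta_{k,E}/\sigma_E^2 : k \in E\setminus j\}$ together with $1/\sigma_E^2$, whose sufficient statistics are $X_{E\setminus j}^T y$ and $\|y\|^2$ respectively. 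Conditioning on these plus the selection event yields exactly the law \eqref{eq:law:coef}. The same reasoning applied with the roles reversed — treating $1/\sigma_E^2$ as the parameter of interest and $\beta_E/\sigma_E^2$ as nuisance — produces \eqref{eq:law:variance}, since the sufficient statistic for the full $\beta_E/\sigma_E^2$ vector is $X_E^T y$.

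The main obstacle, and really the only substantive point beyond bookkeeping, is confirming that the conditioning-on-sufficient-statistics argument still goes through after further conditioning on $\{(M_{u,E}, H_0) \in \hat{\cal Q}(y)\}$. This reduces to the observation that the selection event is measurable with respect to $y$ (equivalently, with respect to the sufficient statistic together with any ancillary information) and that conditioning preserves the exponential family structure with the same natural parameter; this is precisely the general step established in \cite{optimal_selective}, so invoking that result closes the argument. I would therefore conclude by citing that framework rather than redoing the measure-theoretic verification.
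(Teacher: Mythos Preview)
Your proposal is correct and follows essentially the same route as the paper: the paper's proof is a one-line invocation of Theorem~5 in \cite{optimal_selective} with the identification $(U,V)=(X_j^T y,(\|y\|^2,X_{E\setminus j}^T y))$ for the coefficient case and $(U,V)=(\|y\|^2,X_E^T y)$ for the variance case, and you have simply made explicit the exponential-family calculation and the logic behind that citation.
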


\begin{proof}
The proof is a direct application of Theorem 5 in \cite{optimal_selective}.
We have $(U(y), V(y)) = (X_j^T y, (\|y\|^2, X_{E \backslash j}^T y))$ for inference
of $\frac{\beta_{j,E}^2}{\sigma_E^2}$, and $(U(y), V(y)) = (\|y\|^2, X_E^T y)$ for
inference of $\sigma_E^2$. 
\end{proof}

Thus, by studying the distributions \eqref{eq:law:coef} and \eqref{eq:law:variance},
we will be able to perform inference after selection via the square-root LASSO.
To gain insight for the law in \eqref{eq:law:coef}, we first look at the
simple case where there is no selection. If there were no selection,
the above law is simply the law of the $T$-statistic
with degrees of freedom $n-|E|$,
$$
\dfrac{e_j^T X_E^{\dagger} y}{\hat{\sigma}_E \cdot \|e_j^T X_E^{\dagger}\|_2},
$$
where 
$$
\hat{\sigma}_E^2 = \frac{\|(I-P_E)y\|^2}{n-|E|}, \quad X_E^{\dagger} = (X_E^T X_E)^{-1} X_E^T,
\quad P_E = X_EX_E^{\dagger}.
$$

The selection event is equivalent to $\{\hat{E}(y) = E\}$ in the context
of this paper, where $\hat{E}$ is defined in \eqref{eq:selected_model}. 
We can explicitly describe the selection procedure
if we further condition on the signs $z_E$, that is instead of conditioning on
the event $\{\hat{E}(y) = E\}$, we condition on the event $\{(\hat{E}(y),\hat{z}_{\hat{E}}(y))=(E,z_E)\}$
in the laws \eqref{eq:law:coef} and \eqref{eq:law:variance}.
Procedures valid under such laws are also valid under those conditioned on $\{\hat{E}(y) = E\}$
since we can always marginalize over $\hat{z}_{\hat{E}}$. Therefore, for computational reasons
we always condition on $\{(\hat{E}(y),\hat{z}_{\hat{E}}(y))=(E,z_E)\}$.

We describe the distributions in \eqref{eq:law:coef}
in detail in Section \ref{sec:dbn}.  We will see that they are truncated $T$
distributions with the degrees of freedom $n - |E|$. Based on these laws, we
construct exact tests for the coefficients $\beta_E$. Given that the
appropriate laws in the case of $\sigma$ known are truncated Gaussian distributions, it is not
surprising that the appropriate distributions here are truncated $T$
distributions. To construct selective intervals, we suggest a natural Gaussian
approximation to the truncated $T$ distribution and investigate its performance
in a regression problem. Such approximation brought much convenience in computation. 

\subsection{Organization}

The take-away message of this paper is that selective inference with $\sigma^2_E$ unknown is possible in the
$n < p$ scenario using the square-root LASSO.
In Section \ref{sec:sqrt_lasso} we describe the square-root LASSO in more detail. 
In particular, we describe the selection events
\begin{equation}
\label{eq:selection_event}
\left\{y: (\hat{E}(y), \hat{z}_{\hat{E}}(y))=(E,z_E)\right\}, \qquad E \subset \{1, \dots, p\}, z_E \in \{-1,1\}^E.
\end{equation} 
Following this, in Section \ref{sec:dbn} we turn our attention to the main
inferential tools, the laws \eqref{eq:law:coef} and \eqref{eq:law:variance},
which allow us to perform inference for the coefficients $\beta_E$ and variance
$\sigma_E^2$ in the selected model. As an application of the p-values obtained
in Section \ref{sec:dbn}, we applied the BHq procedure
\cite{benjamini1995controlling} to these p-values and compare the FDR control
and power to another method designed to control FDR. In Section \ref{sec:application}, we also
compare the performance of our variance estimators $\hat{\sigma}^2$ with
other estimates of the variance.

\section{The Square Root LASSO}
\label{sec:sqrt_lasso}

We now use the \emph{Karush-Kuhn-Tucker} (KKT) conditions to describe the selection event.
Recall the convex program \eqref{eq:sqrt_lasso}\begin{equation}
\label{eq:sqrt_lasso:0}
\hat{\beta}(y) = \hat{\beta}_{\lambda}(y) = \argmin_{\beta \in \real^p} \|y-X\beta\|_2 + \lambda \cdot \|\beta\|_1
\end{equation}
as well as our shorthand for the selected variables and signs 
\eqref{eq:selected_model}, \eqref{eq:selected_signs}.

The KKT conditions characterize the solution as follows: $(\hbeta(y), \hz)$ is 
the solution and corresponding subgradient of \eqref{eq:sqrt_lasso:0} if and only if
\begin{gather}
    \frac{X^T(y-X\hbeta(y))}{\|y-X\hbeta(y)\|_2} = \lambda \cdot \hz \label{eq:kkt}
\\
\hz_j \in \BCAS \sign(\hat{\beta}_{j}(y)) & \text{if } j \in \hat{E}(y) \\ [-1, 1] & \text{if }j \not \in \hat{E}(y) . \ECAS
\end{gather}
We see that our choice of shorthand for $\hat{z}_{\hat{E}}$ 
corresponds to the $\hat{E}(y)$ coordinates of the subgradient
of the $\ell_1$ norm.

Our first observation, which we had not found in the literature
on square-root LASSO, is that square-root LASSO and LASSO have equivalent
solution paths. In other words, the square-root LASSO solution path is
a reparametrization of the LASSO solution path. Specifically,
\begin{lemma}
    \label{lem:soln_path}
For every $(E,z_E)$, on the event 
$\{(\hat{E}(y), \hat{z}_{\hat{E}}(y)) = (E,z_E)\}$ the solutions
of the LASSO and square-root LASSO are related as
\begin{equation}
\hat{\beta}(y) = \hat{\beta}_{\lambda}(y) = \tilde{\beta}_{\hat{\gamma}(y)}(y)
\end{equation}
where
\begin{equation}
\label{eq:gamma_hat}
\hat{\gamma}(y) = \lambda \hat{\sigma}_E(y) \cdot \left(\frac{n-|E|}{1- \lambda^2 \|(X_E^T)^{\dagger}z_E\|^2_2}\right)^{1/2}
\end{equation}
and
\begin{equation}
\label{eq:sigmaE}
\hat{\sigma}^2_E(y) = \frac{\|(I-X_EX_E^{\dagger})y\|^2_2}{n-|E|} = \frac{\|(I-P_E)y\|^2_2}{n-|E|}
\end{equation}
is the usual ordinary least squares estimate of $\sigma^2_E$ in the model $M_{u,E}$.
\end{lemma}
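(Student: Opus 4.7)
The plan is to proceed purely from the KKT characterization \eqref{eq:kkt} and recognize that the square-root LASSO and the LASSO share the same stationarity equations once one identifies $\gamma$ with $\lambda \cdot \|y - X\hbeta(y)\|_2$. First I would rewrite the square-root LASSO KKT condition as
\[
X^T(y - X\hbeta(y)) \;=\; \bigl(\lambda \cdot \|y - X\hbeta(y)\|_2\bigr) \cdot \hz,
\]
and observe that this is exactly the LASSO KKT system at tuning parameter $\gamma = \lambda \|y - X\hbeta(y)\|_2$ with the same subgradient $\hz$. Since general position of the columns of $X$ guarantees uniqueness of both solutions, this identification gives $\hbeta_\lambda(y) = \tilde{\beta}_{\hat\gamma(y)}(y)$ with $\hat\gamma(y) = \lambda \cdot \|y - X\hbeta(y)\|_2$. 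So the entire content of the lemma reduces to computing the residual norm on the selection event in closed form.

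Next, on the event $\{(\hat{E}(y), \hz_{\hE}(y)) = (E, z_E)\}$, only the $E$-block of $\hbeta$ is nonzero, so the active part of the KKT condition reads
\[
X_E^T\bigl(y - X_E \hbeta_E(y)\bigr) \;=\; \lambda \cdot r(y) \cdot z_E, \qquad r(y) := \|y - X_E \hbeta_E(y)\|_2.
\]
Inverting the Gram matrix gives $X_E \hbeta_E(y) = P_E y - \lambda r(y)\, X_E(X_E^T X_E)^{-1} z_E$, whence
\[
y - X_E \hbeta_E(y) \;=\; (I - P_E) y \;+\; \lambda r(y)\cdot X_E(X_E^T X_E)^{-1} z_E.
\]
The two summands are orthogonal because $(I - P_E) X_E = 0$, so taking squared norms yields
\[
r(y)^2 \;=\; \|(I - P_E) y\|_2^2 \;+\; \lambda^2 r(y)^2 \cdot z_E^T (X_E^T X_E)^{-1} z_E.
\]
Using $(X_E^T)^\dagger = X_E (X_E^T X_E)^{-1}$ (valid since $X_E$ has full column rank under general position), we have $\|(X_E^T)^\dagger z_E\|_2^2 = z_E^T (X_E^T X_E)^{-1} z_E$. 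Solving for $r(y)^2$ and substituting the definition of $\hat{\sigma}_E^2(y)$ in \eqref{eq:sigmaE} gives
\[
r(y) \;=\; \hat{\sigma}_E(y)\,\left(\frac{n-|E|}{1 - \lambda^2 \|(X_E^T)^\dagger z_E\|_2^2}\right)^{1/2},
\]
and multiplying by $\lambda$ reproduces the formula \eqref{eq:gamma_hat} for $\hat\gamma(y)$.

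The only genuine subtlety is making sure the identification is well-posed: one must verify that $r(y) > 0$ on the selection event (otherwise the square-root LASSO objective is non-differentiable at the optimum and the KKT condition as written does not apply), and that $1 - \lambda^2 \|(X_E^T)^\dagger z_E\|_2^2 > 0$ so that the expression for $r(y)$ is real and positive. The latter is actually forced by the former: squaring the active KKT equation gives $\lambda^2 r(y)^2 \|(X_E^T)^\dagger z_E\|_2^2 = r(y)^2 - \|(I - P_E) y\|_2^2 \le r(y)^2$, so the inequality is automatic, and strictness follows whenever $(I - P_E) y \neq 0$. These are the only checks beyond the algebra above; the rest of the proof is the KKT-matching argument described in the first paragraph.
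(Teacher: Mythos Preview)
Your proof is correct and follows essentially the same route as the paper: identify $\hat\gamma(y)=\lambda\cdot\|y-X\hat\beta(y)\|_2$ by matching the square-root LASSO and LASSO KKT systems, then on the event solve the active block for $\hat\beta_E$, form the residual $(I-P_E)y+\lambda r(y)(X_E^T)^{\dagger}z_E$, and take squared norms to extract $r(y)$ in closed form. Your additional paragraph verifying $r(y)>0$ and $1-\lambda^2\|(X_E^T)^{\dagger}z_E\|_2^2>0$ is a nice rigor check that the paper leaves implicit.
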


\begin{proof}
On the event in question, we can rewrite the KKT conditions using the fact $X \hbeta = X_E \hbeta_{E}$ as
\begin{gather}
X_E^T (y - X_E \hbeta_{E}(y)) = c_E(y) \cdot \lambda \cdot z_{E} \label{eq:kkt1} \\
X_{-E}^T (y - X_E \hbeta_{E}(y)) = c_E(y) \cdot \lambda \cdot z_{-E} \label{eq:kkt2} \\
\text{sign}(\hbeta_{E}(y)) = z_{E}, \quad \| \hz_{-E} \|_\infty < 1
\end{gather}
with
$$
\begin{aligned}
c_E(y) &= \|y-X_E\hat{\beta}_E(y)\|_2.
\end{aligned}
$$
Comparing \eqref{eq:kkt1} with the KKT conditions of LASSO in \cite{lee2013exact}, this indicates $\hat{\gamma}(y) = \lambda c_E(y)$. 
We deduce from \eqref{eq:kkt1} that the active part of the coefficients
\begin{equation}
\label{eq:lasso_soln}
\hat{\beta}_E(y) = (X_E^TX_E)^{-1}(X_E^Ty - \lambda \cdot c_E(y) \cdot z_E).
\end{equation}
Plugging the estimate $\hat{\beta}_E(y)$,
\begin{equation}
    \label{eq:norm}
y-X_E\hat{\beta}_E(y) = (I-P_E)y + \lambda \cdot c_E(y) \cdot (X_E^T)^{\dagger} z_E.
\end{equation}

Computing the squared Euclidean norm  of both sides yields
$$
\begin{aligned}
c_E^2(y) &= \frac{\|(I - P_E)y\|^2_2}{1 - \lambda^2 \|(X_E^T)^{\dagger} z_E\|^2_2} \\
&= \hat{\sigma}^2_E(y)\cdot  \frac{n-|E|}{1 - \lambda^2 \|(X_E^T)^{\dagger}z_E\|^2_2}.
\end{aligned}
$$ 

\end{proof}

\subsection{A first example}

Before we move on to the general case, it is helpful to look at the characterization of the selection event in the case of an orthogonal design matrix. 
When the design matrix $X \in \real^{n \times p}$ has orthogonal columns, $\hat{\beta}_E$ and $c_E$ can be simplified as
$$
\hat{\beta}_E = X_E^T y - \lambda c_E z_E, \quad c_E = \hat{\sigma}_E \cdot \displaystyle\sqrt{\frac{n-|E|}{1 - \lambda^2 |E|}}.
$$

The selection event 
$$
\left\{y: \text{sign}(\hbeta_{E}(y)) = z_{E} \right\},
$$
is decoupled into $|E|$ constraints. For each $i \in E$, 
\begin{equation}
\label{eq:trunc_t}
    \dfrac{z_i x_i^T y}{\hsigma_E(y)} \geq \lambda \displaystyle\sqrt{\frac{n-|E|}{1 - \lambda^2  |E|}}.
\end{equation}
The left-hand side of \eqref{eq:trunc_t} is closely related to the inference on $\beta_i$, and follows a $T$-distribution with $n-|E|$ degrees
of freedom. The constraint \eqref{eq:trunc_t} is a constraint on the usual $T$-statistic on the
selection event which implies one should use the truncated $T$ distribution to tests whether or not $\beta_i=0$.

\subsection{Characterization of the selection event}

We now describe the selection event for general design matrices, which will
be used for deriving the laws \eqref{eq:law:coef} and \eqref{eq:law:variance}.
Using Equation \eqref{eq:lasso_soln} in Lemma \ref{lem:soln_path}, we see that the event
\begin{equation}
\label{eq:active_constraints0}
\left\{y: \text{sign}(\hbeta_{E}(y)) = z_{E} \right\}
\end{equation}
is equal to the event
\begin{equation}
\label{eq:active_constraints}
\left\{y: \hat{\sigma}_E(y) \cdot \alpha_{i,E} - z_{i,E} \cdot U_{E,i}(y) \leq 0, i \in E \right\}
\end{equation}
where
$$
U_{E,i}(y) = \frac{ e_i^T X_E^{\dagger} y }{\|e_i^TX_E^{\dagger}\|_2}
$$
and
$$
\alpha_{i,E} = \lambda \cdot z_{i,E} \cdot \|e_i^TX_E^{\dagger}\|_2 \cdot (1- \lambda^2 \|(X_E^T)^{\dagger}z_E\|^2_2)^{-1/2} e_i^T(X_E^TX_E)^{-1} z_E.
$$
While the expression is a little involved, it is explicit and easily
computable given $(X_E^TX_E)^{-1}$.

Let us now consider the inactive inequalities. The event
\begin{equation}
\label{eq:inactive_constraints0}
\left\{y: \|\hat{z}_{-E}\|_{\infty} < 1 \right\}
\end{equation}
is equal to the event
\begin{equation}
\label{eq:inactive_constraints}
\left\{y:\left|\frac{X_i^T(I-P_E)y}{\lambda \cdot c_E(y)} + X_i^T(X_E^T)^{\dagger}z_E \right| < 1, \qquad i \in -E \right\}.
\end{equation}

For each $i \in -E$ these are equivalent the intersection of the inequalities
\begin{equation}
\begin{aligned}
\label{eq:inactive2}
\left(\frac{1 - \lambda^2 \|(X_E^T)^{\dagger}z_E\|^2_2}{\lambda^2} \right)^{1/2}X_i^TU_{-E}(y) &< 1 - X_i^T (X_E^T)^{\dagger}z_E \\
\left(\frac{1 - \lambda^2 \|(X_E^T)^{\dagger}z_E\|^2_2}{\lambda^2} \right)^{1/2}X_i^TU_{-E}(y) &> -1 - X_i^T (X_E^T)^{\dagger}z_E. \\
\end{aligned}
\end{equation}
where
\begin{equation}
\begin{aligned}
\label{eq:ancillary}
U_{-E}(y) = \frac{(I-P_E)y}{\|(I-P_E)y\|_2}.
\end{aligned}
\end{equation}

To summarize, the selection event $\left\{y: (\hat{E}(y), \hat{z}_{\hat{E}}(y))=(E,z_E)\right\}$ is
equivalent to \eqref{eq:active_constraints} and \eqref{eq:inactive2}.

\section{The conditional law}
\label{sec:dbn}

From the characterization of the selection event in the above section
we can now derive the laws \eqref{eq:law:coef} and \eqref{eq:law:variance}. 
First, the following lemma provides some simplification.

\begin{lemma}
\label{lem:equivalent}
Conditioning on $(\hat{E}(y), \hat{z}_{\hat{E}}(y)) = (E, z_E)$, the law for
inference of $\big(\frac{\beta_E}{\sigma^2_E}, \frac{1}{\sigma^2_E}\big)$
is equivalent to
\begin{equation}
\label{eq:conditional:law:simple}
\mathbb{Q}_{E, z_E} = {\cal L}\left[U_E(y), \|(I-P_E)y\|^2 \mid \hat{\sigma}_E \cdot \alpha_{i,E} - z_{i,E} \cdot U_{E,i} \leq 0, i \in E \right].
\end{equation}
Moreover, the statistic $U_{-E}$ is ancillary for both $\Pp_E \in M_{u,E}$ and $\Q_{E, z_E}$.

Therefore, the laws \eqref{eq:law:coef} and \eqref{eq:law:variance} can be simplified to
$$
{\cal L}\left[U_{E,j}(y) \mid \|y\|^2, U_{E, E\backslash j}(y), \hat{\sigma}_E \cdot \alpha_{i,E} - z_{i,E} \cdot U_{E,i} \leq 0, i \in E\right],
$$ 
and 
$$
{\cal L}\left[\hat{\sigma}_E^2(y) \mid U_{E}(y), \hat{\sigma}_E \cdot \alpha_{i,E} - z_{i,E} \cdot U_{E,i} \leq 0, i \in E\right].
$$ 
respectively.
\end{lemma}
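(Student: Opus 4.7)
The plan is to exploit the decomposition of the selection event into an \emph{active} part depending only on the statistic $(U_E(y), \hat\sigma_E(y))$ and an \emph{inactive} part depending only on the residual direction $U_{-E}(y)$, combined with the ancillarity of $U_{-E}$ under the model $M_{u,E}$. From the characterization in Section~\ref{sec:sqrt_lasso}, $\{(\hat E(y), \hat z_{\hat E}(y)) = (E, z_E)\}$ is the intersection of \eqref{eq:active_constraints} and \eqref{eq:inactive2}: the former is a function of $(U_E, \hat\sigma_E)$ alone, while the latter, through its dependence on $X_i^T U_{-E}(y)$, is a function of $U_{-E}$ alone.

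Next I would establish the ancillarity claim together with the independence that makes it useful. Under any $\Pp \in M_{u,E}$, $(I - P_E)y \sim N(0, \sigma_E^2 (I - P_E))$, so $(I - P_E)y/\sigma_E$ is a standard Gaussian on the fixed $(n - |E|)$-dimensional subspace $\range(I - P_E)$. Hence $U_{-E}$ is uniformly distributed on the unit sphere of that subspace, with a law free of $(\beta_E, \sigma_E^2)$; this is the ancillarity. The magnitude-direction decomposition of a spherical Gaussian yields $\|(I - P_E)y\|^2 \perp U_{-E}$, and orthogonality of projections gives $P_E y \perp (I - P_E)y$. Since $X_E^T y = X_E^T P_E y$ and $\|y\|^2 = \|P_E y\|^2 + \|(I - P_E)y\|^2$, the sufficient statistic $(X_E^T y, \|y\|^2)$ is a function of $(P_E y, \|(I - P_E)y\|^2)$, and is therefore independent of $U_{-E}$ under every $\Pp \in M_{u,E}$.

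For the equivalence of conditional laws, set $S = (X_E^T y, \|(I - P_E)y\|^2)$, let $A_{\mathrm{act}}$ denote the active constraint event (which is $\sigma(S)$-measurable since $U_E$ and $\hat\sigma_E^2$ are determined by $S$), and let $A_{\mathrm{in}}$ denote the inactive constraint event (which is $\sigma(U_{-E})$-measurable). The independence above gives
\[
\Pp(S \in B,\, A_{\mathrm{act}},\, A_{\mathrm{in}}) = \Pp(S \in B,\, A_{\mathrm{act}}) \cdot \Pp(A_{\mathrm{in}}),
\]
and analogously $\Pp(A_{\mathrm{act}},\, A_{\mathrm{in}}) = \Pp(A_{\mathrm{act}}) \cdot \Pp(A_{\mathrm{in}})$, so the $\Pp(A_{\mathrm{in}})$ factors cancel to yield $\Pp(S \in B \mid A_{\mathrm{act}}, A_{\mathrm{in}}) = \Pp(S \in B \mid A_{\mathrm{act}}) = \Q_{E, z_E}(B)$. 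The simplified forms of \eqref{eq:law:coef} and \eqref{eq:law:variance} then follow because $U_E$ is a fixed bijective linear transform of $X_E^T y$ (via $X_E^{\dagger}$ and the row norms), while $\|y\|^2 - (n - |E|)\hat\sigma_E^2 = \|P_E y\|^2$ is a deterministic function of $X_E^T y$, so one may rewrite the conditioning sets in terms of $(U_E, \hat\sigma_E^2)$ without loss. The main obstacle is the careful accounting that underlies this independence, especially checking that \eqref{eq:inactive2} depends on $(I-P_E)y$ only through its direction $U_{-E}$ and that the sufficient statistic factors through quantities orthogonal to $U_{-E}$; once these are in place, the measure-theoretic factorization is routine.
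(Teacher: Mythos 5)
Your proposal is correct and follows essentially the same route as the paper: decompose the selection event into active constraints measurable with respect to the sufficient statistic $(X_E^Ty,\|(I-P_E)y\|^2)$ and inactive constraints measurable with respect to $U_{-E}$, then use the independence of $U_{-E}$ from that statistic to drop the inactive constraints from the conditioning; you are merely more explicit about the spherical-symmetry and factorization steps. The only point left implicit is the ancillarity of $U_{-E}$ under $\Q_{E,z_E}$ itself, but that follows at once from the independence you establish together with the parameter-free marginal law of $U_{-E}$.
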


\begin{proof}
Per Lemma \ref{lem:suff}, 
$(X_E^T y, \|y\|^2)$ are sufficient statistics for $\big(\frac{\beta_E}{\sigma_E^2}, \frac{1}{\sigma_E^2}\big)$.
Moreover, note $U_E(y)$ is linear transformation of $X_E^T y$ and
$$
\|y\|^2 = \|P_E y\|^2 + \|(I-P_E)y\|^2, \qquad P_E = X_E (X_E^T X_E)^{-1} X_E^T,
$$
thus it suffices to consider the law of $(U_E(y), \|(I-P_E)y\|^2)$ conditioning on 
$\{(\hat{E}(y), \hat{z}_{\hat{E}}(y)) = (E, z_E)\}$.

Moreover, the selection event can be described in two sets of constraints,
the ones involving $U_E(y)$ in \eqref{eq:active_constraints} and the ones involving
$U_{-E}(y)$ in \eqref{eq:inactive2}. Notice that $U_{-E}(y)$ is independent of $(U_E(y), \hat{\sigma}_E^2(y))$,
thus we can drop it in the conditional distribution and get \eqref{eq:conditional:law:simple}. 

We note that $U_{-E}(y)$ is ancillary for $\Pp_E$. From the above, 
the density of $\Q_{E, z_E}$ is that of $\Pp_E$ times an indicator function
that does not involve $U_{-E}(y)$. Thus $U_{-E}(y)$ is ancillary for $\Q_{E, z_E}$ as well.

The simplification of the joint laws leads to that of the marginal laws and the second
statement holds.
\end{proof}

\subsection{Inference under quasi-affine constraints}

The general form of the law $\Q_{E, z_E}$ in \eqref{eq:conditional:law:simple} is
that of a multivariate
Gaussian and an independent $\chi^2$ of degrees of freedom $n-|E|$ 
and satisfying some constraints. These constraints are affine in the Gaussian
fixing the $\chi^2$, but not affine in the data. To study the distributions
under such constraints, we propose the following framework for these
{\em quasi-affine constraints}. 
 
Specifically, we want to study the distribution of $y \sim N(\mu, \sigma^2 I)$ 
subject to quasi-affine constraints
\begin{equation}
\label{eq:quasi:affine}
Cy \leq \hat{\sigma}_P(y) \cdot b
\end{equation}
with 
$$
\hat{\sigma}^2_P(y) = \frac{\|(I-P)y\|^2_2}{\text{Tr}(I-P)}
$$
where $P$ is a projection matrix, $C \in \real^{d \times p}$, $b \in \real^d$ and
\begin{equation}
\label{eq:subspace}
CP = P, \quad P\mu = \mu.
\end{equation}
Assumptions \eqref{eq:subspace} are made to simplify notation. Inference for quasi-affine
constraints without these assumptions can be deduced similarly. In the example of inference after
square-root Lasso, assumptions \eqref{eq:subspace} are satisfied when we select the correct model $E$.
We denote the above laws as $\mathbb{M}_{C,b,P}$, that is  
$$
\Pp(y \in A \vert Cy \leq \hat{\sigma}_P(y) \cdot b) \overset{\Delta}{=} \M_{(C,b,P)}(A), \qquad y \sim N(\mu, \sigma^2 I).
$$

Our goal is exact inference for $\eta^T\mu$, for the directional vector $\eta$ satisfying $P\eta=\eta$.
Without loss of generality we assume $\|\eta\|^2_2=1$.
To test the null hypothesis $H_0:\eta^T\mu=\theta$, we parametrize the data into
$\eta^T y$, the projection onto the direction $\eta$, and the orthogonal direction
$(P - \eta \eta^T)y$. From the assumptions \eqref{eq:subspace}, with some algebra,
we can see that $\eta^T y$ is a sufficient statistic for $\eta^T \mu$, with
$((P - \eta \eta^T)y, \|y\|^2)$ being a sufficient statistic for the nuisance parameters.
In the following, we will prove the law
\begin{equation}
\eta^Ty - \theta \ \big \vert \ (P - \eta\eta^T)y, \|y-\theta \eta\|^2_2 \qquad y \sim \M_{(C,b,P)}.
\end{equation}
is a truncated $T$
with degrees of freedom $\text{Tr}(I-P)$ and an explicitly computable
truncation set.

For some set $\Omega \subset \real$ let $T_{\nu |\Omega}$ denote the distribution function of the law of
$T_{\nu} | T_{\nu} \in \Omega:$
\begin{equation}
T_{\nu |\Omega}(t) = \Pp(T_{\nu} \leq t \vert T_{\nu} \in \Omega).
\end{equation}

\begin{theorem}[Truncated $t$]
\label{thm:truncated:t}
Suppose that $y \sim \M_{(C,b,P)}$.
The law 
\begin{equation}
\eta^Ty - \theta \ \big \vert \ (P - \eta\eta^T)(y-\theta\eta), \|y-\theta\eta\|^2_2 \overset{D}{=} T_{\text{Tr}(I-P) |\Omega}
\end{equation}
where
\begin{equation}
\Omega = \Omega(C, b, P, \|(I-P)y\|^2_2 + (\eta^Ty-\theta)^2, (P-\eta\eta^T)y, \theta).
\end{equation}
The precise form of $\Omega$ is given in \eqref{eq:slice} below.
\end{theorem}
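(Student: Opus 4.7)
The plan is to reduce the claim to a classical calculation of the $T$ distribution coming from spherical symmetry, with the truncation set $\Omega$ arising as the image of the selection event under a monotone change of variables. First I would reparametrize: fix $\theta$ and let $\tilde y = y - \theta\eta$. Because $P\eta = \eta$, the three operators $\eta\eta^T$, $P - \eta\eta^T$, and $I - P$ project onto mutually orthogonal subspaces of dimensions $1$, $\text{Tr}(P) - 1$, and $\nu := \text{Tr}(I-P)$, respectively. Set
\[
z = \eta^T\tilde y = \eta^Ty - \theta, \qquad v = (P - \eta\eta^T)\tilde y = (P - \eta\eta^T)y, \qquad w = (I - P)\tilde y = (I - P)y,
\]
so that $y \leftrightarrow (z, v, w)$ is a linear bijection. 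Under the null $\eta^T\mu = \theta$ together with the standing hypothesis $P\mu = \mu$, the shifted mean $\mu - \theta\eta$ lies in $\text{range}(P) \cap \eta^\perp$, and $(z, v, w)$ are jointly independent Gaussians with $z \sim N(0, \sigma^2)$ and $w \sim N(0, \sigma^2(I - P))$.

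Next I would derive the pre-selection conditional law. The statistics conditioned on in the theorem are $v$ and $\|\tilde y\|^2$, equivalent, given $v$, to conditioning on $r^2 := z^2 + \|w\|^2$. Since $(z, w)$ is a spherically symmetric Gaussian in $\mathbb{R} \oplus \text{range}(I - P)$ under $H_0$, its conditional law given $r$ is uniform on the sphere of radius $r$ in that space, of dimension $\nu$. Writing $w = \|w\| u$ and $(z, \|w\|) = r(\cos\phi, \sin\phi)$, the standard polar decomposition (area element $\sin^{\nu-1}\phi\, d\phi\, du$) shows that $T := \sqrt\nu\, z/\|w\| = \sqrt\nu\, \cot\phi$ has density proportional to $(T^2 + \nu)^{-(\nu+1)/2}$, i.e., $T \sim T_\nu$, and is independent of the direction $u$. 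In particular, $T$ is a strictly monotone bijection of $z \in (-r, r)$ onto $\mathbb{R}$.

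Finally I would cut this distribution by the selection event. The structural assumption \eqref{eq:subspace} ensures that $Cy$ depends on $y$ only through $Py$ (for the active constraints of \eqref{eq:active_constraints}, this is the identity $C = CP$ obtained from $X_E^\dagger = X_E^\dagger P_E$); in particular $Cw = 0$, so
\[
Cy = (z + \theta)\, C\eta + Cv.
\]
Substituting $\hat\sigma_P(y) = \|w\|/\sqrt\nu = \sqrt{r^2 - z^2}/\sqrt\nu$, the quasi-affine constraint $Cy \leq \hat\sigma_P(y)\, b$ becomes a family of scalar inequalities in $z$,
\[
(z + \theta)(C\eta)_i + (Cv)_i \;\leq\; \frac{b_i}{\sqrt\nu}\sqrt{r^2 - z^2}, \qquad i = 1, \dots, d,
\]
each of which reduces, upon isolating the square root and squaring with appropriate sign bookkeeping, to a quadratic (or linear) inequality in $z$. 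The intersection of their solution sets defines $\Omega_z \subset [-r, r]$; the monotone bijection $z \mapsto T$ then carries $\Omega_z$ to the truncation set $\Omega$ of \eqref{eq:slice}, yielding the stated law $T_{\nu \mid \Omega}$.

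The main obstacle I anticipate is the bookkeeping in the last step: squaring must correctly handle the signs of $(z+\theta)(C\eta)_i + (Cv)_i$ and of $b_i$, so that each quasi-affine inequality is equivalent to the correct union of at most two intervals (rather than being spuriously enlarged). A secondary care-point is to justify the spherical-symmetry step at the level of regular conditional probabilities, since the conditioning is on a positive-dimensional nuisance, and to note that the direction $u$ may be marginalised out harmlessly because neither the conditioning variables nor the selection event depend on $u$.
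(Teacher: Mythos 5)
Your proof is correct and takes essentially the same route as the paper's: both reduce the claim to the independence of the studentized statistic $\tau_\theta(y)=(\eta^Ty-\theta)/\hat{\sigma}_P(y)$ from the conditioning statistics $\bigl((P-\eta\eta^T)y,\ \|y-\theta\eta\|^2\bigr)$, and then transport the quasi-affine constraints into the truncation set $\Omega$ of \eqref{eq:slice} via $\hat{\sigma}_P(y)=\sqrt{(r^2-z^2)/\nu}$. Your polar-coordinate/spherical-symmetry derivation of the $T_\nu$ law and your explicit use of $C=CP$ (so that $C(I-P)y=0$, correcting the paper's typo ``$CP=P$'') merely spell out steps the paper asserts without detail.
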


\begin{proof}
We use the short hand for the sufficient statistics 
$$
(U_{\theta}, V, W_{\theta})(y) 
= (\eta^Ty-\theta, (P - \eta\eta^T)y, \|(I-(P-\eta^T\eta))(y-\theta \eta)\|^2_2).
$$
Since
$$
W_{\theta}(y) = \|y - \theta \eta\|^2 - \|(P- \eta\eta^T)y\|^2,
$$
conditioning on $(V, \|y - \theta \eta\|^2)$ is equivalent to conditioning on $(V, W_{\theta})$.

Our main strategy is to construct a test statistic independent of $(V, W_{\theta})$, this can be
easily done through the usual T-statistic,
$$
\tau_{\theta}(y) = \frac{\eta^Ty-\theta}{\hat{\sigma}_P(y)}.
$$
Let $d=\text{Tr}(I-P)$, so
$$
W_{\theta}(y) = \|(I-P)y\|^2_2 + (\eta^Ty-\theta)^2
= d \cdot \sigma_P(y)^2 + (\eta^Ty-\theta)^2.
$$
Note that $\tau_{\theta}$ is independent of $W_{\theta}$ and $V$.

We next rewrite the quasi-affine inequalities \eqref{eq:quasi:affine} as
$$
\begin{aligned}
U_{\theta}(y) \nu + \xi &\leq \sigma_P(y) b
\end{aligned}
$$
with 
$$
\begin{aligned}
\nu &= C\eta \\
\xi &= \xi(V(y)) =  C(\theta \eta + V(y)).
\end{aligned}
$$
Multiplying both sides by $\frac{W_{\theta}(y)^{1/2}}{\sigma_P(y)}$, we have
$$
\begin{aligned}
\tau_{\theta}(y)W_{\theta}(y)^{1/2}  \nu  + \xi(V(y)) \cdot \sqrt{d + \tau_{\theta}^2(y)} &\leq W_{\theta}(y)^{1/2} b.
\end{aligned}
$$
This is the constraint on the $T$-distribution.

Because of the independence between $\tau_{\theta}(y)$ and $(V(y), W_{\theta}(y))$, its distribution is simply
a truncated $T$-distribution, $T_{d | \Omega}$, where
\begin{equation}
\label{eq:slice}
\Omega(C,b,P,w,v,\theta) = \bigcap_{1 \leq i \leq \text{nrow}(A)} \left\{t \in \real: t \sqrt{w} \cdot \nu_i + \xi_i(v) \cdot \sqrt{d + t^2} \leq \sqrt{w} \cdot b \right\}
\end{equation}
and $d, \xi(v)$ are as above.
Each individual inequality can be solved explicitly, with each one yielding at most 2 intervals. In practice, we have observed the intersection of the above is
not too complex.
\end{proof}

\begin{remark}
Following Lemma \ref{lem:equivalent}, we conduct selective inference with the quasi-affine constraints of square-root Lasso, where
$$
C = - \diag(z_E) \diag((X_E^T X_E)^{-1}) X_E^{\dagger}, \quad P = P_E, \quad b = -\alpha.
$$
To test hypothesis $H_0: \beta_{j,E} = 0$, we take $\eta = e_j X_E^{\dagger}$.
\end{remark}

\subsection{Inference for $\sigma$ and debiasing under $\M_{(C,b,P)}$}
\label{sec:sigma}

The law $\M_{(C,b,P)}$ is parametric, and in the context of model
selection we have observed $y$ inside the set
$$
Cy \leq \hat{\sigma}_P(y) \cdot b.
$$
The usual OLS estimates
$
X_E^{\dagger}y$ are biased under $\M_{(C,b,P)}=\M_{(C,b,P)}(\mu,\sigma^2)$. 
In this parametric
setting, there is a natural procedure to attempt to debias
these estimators.

If we fix the sufficient statistics to be
$T(y)=(Py, \|y\|^2_2)$, then the natural parameters 
of the laws in $\M_{(C,b,P)}$ are $(\mu/\sigma^2, -(2\sigma^2)^{-1})$.
Solving the score equations
\begin{equation}
\label{eq:MLE}
\int_{\real^n} T(z) \; \M_{(C,b,P);(\mu,\sigma^2)}(dz) - T(y) = 0
\end{equation}
for $(\hat{\mu}(y), \hat{\sigma}^2(y))$
corresponds to selective maximum likelihood estimation under $\M_{(C,b,P)}$.
In the orthogonal design and known variance setting, this problem
was considered by \cite{reid2013study}. 
In our current setting,
this requires sampling from the constraint set, which is generally non-convex.

Instead, we consider estimation of each parameter separately based on
a form of pseudo-likelihood.
Unfortunately, in the
unknown variance setting,
this approach yields estimates either for coordinates of
$\mu/\sigma^2$ or $\sigma^2$ rather
than coordinates of $\mu$ itself. We propose estimating 
$\sigma^2$ using pseudo-likelihood and plugging in this value to a
quantity analogous to $\M_{(C,b,P)}$ but with known variance, i.e. the law of
$y \sim N(\mu, \sigma^2 I), P\mu=\mu$ with $\sigma^2$ known subject to an affine 
constraint. This approximation is discussed in Section \ref{sec:approx} below.

The pseudo-likelihood is based on the law of one
sufficient statistic conditional on the other
sufficient statistics. Therefore, to estimate
$\sigma^2$ we consider the likelihood based on the law
\begin{equation}
\|(I-P)y\|^2_2 \big \vert Py, \qquad y \sim \M_{(C,b,P);(\mu,\sigma^2)}.
\end{equation}
This law depends only on $\sigma^2$ and can be used for exact inference 
about $\sigma^2$, though for the parameter $\sigma^2$ an
estimate is perhaps more useful than selective tests or 
selective confidence intervals.

Direct inspection of the inequalities yield that this law is equivalent to
$\sigma^2 \cdot \chi^2_{\text{Tr}(I-P)}$ 
truncated to the interval $[L(Py), U(Py)]$ where
\begin{equation}
\begin{aligned}
L(Py) &= \max_{i:b_i \geq 0} \frac{(Cy)_i}{b_i} \\
U(Py) &= \min_{i:b_i \leq 0} \frac{(Cy)_i}{b_i}.
\end{aligned}
\end{equation}

For $\Omega \subset \real$, let $G_{\nu,\sigma^2,\Omega}$ denote the law $\sigma^2 \cdot \chi^2_{\nu}$ truncated to $\Omega$
$$
G_{\nu,\sigma^2,\Omega}(t) = \Pp \left(\chi^2_{\nu} \leq t | \chi^2_{\nu} \in \Omega / \sigma^2\right).
$$
The pseudo-likelihood estimate $\hat{\sigma}_{PL}(y)$ for $\sigma^2$ is the 
root of
\begin{equation}
\sigma \mapsto H_{\text{Tr}(I-P)}(L(Py), U(Py), \sigma^2) -  \hat{\sigma}^2_P(y)
\end{equation}
where
$$
H_{\nu}(L, U, \sigma^2) = \frac{1}{\nu} \int_{[0,\infty)} t \; G_{\nu,\sigma^2,[L,U]}(dt).
$$
This is easily solved by sampling from $\sigma^2 \chi^2_{\text{Tr}(I-P)}$ truncated to $[L(Py),U(Py)]$. 

\begin{figure}
\centering
\begin{subfigure}[t]{0.45\textwidth}
  \includegraphics[width=\textwidth]{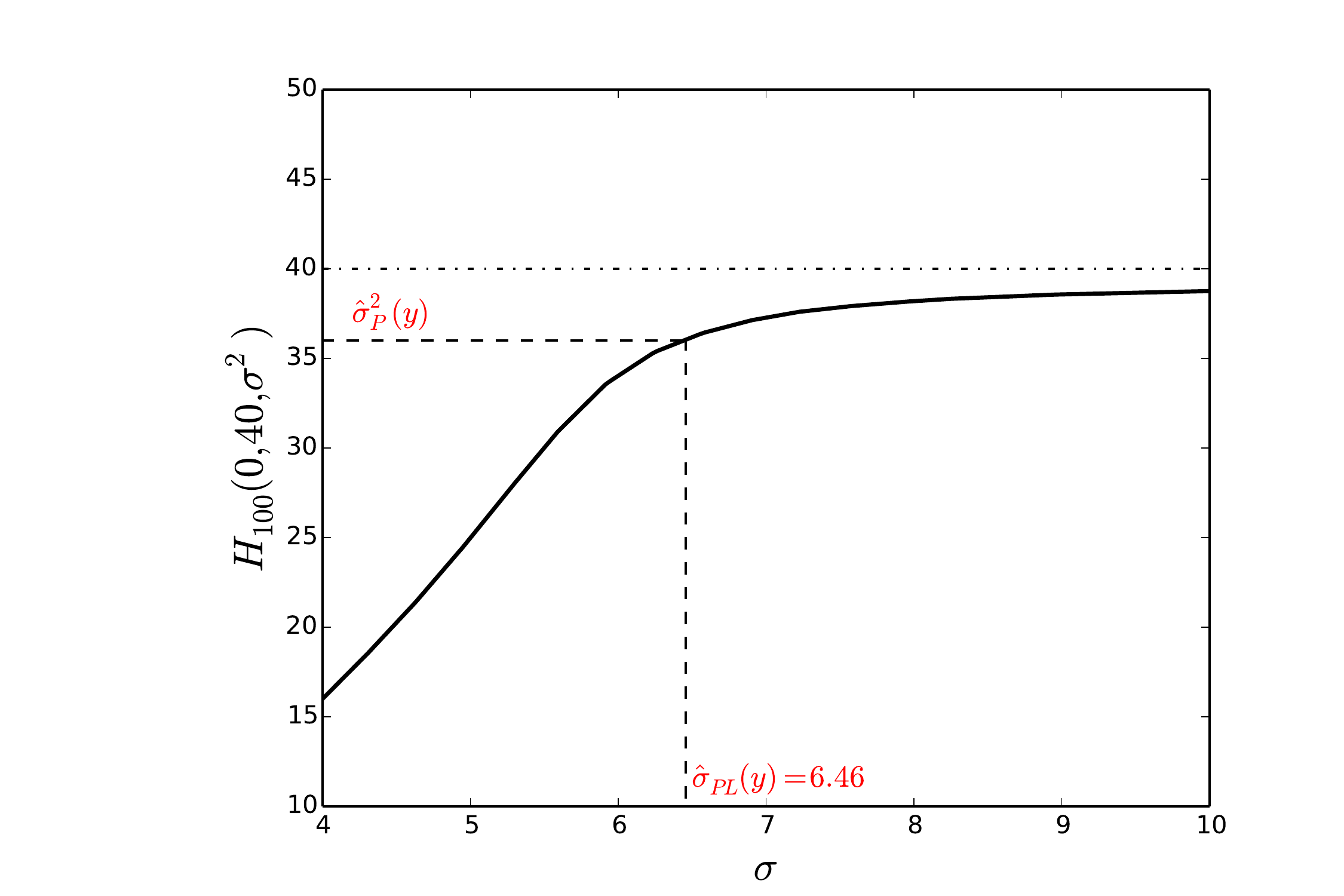}
  \caption{Pseudo-likelihood estimator}
  \label{fig:estimating_sigma}
\end{subfigure}
~ 
\begin{subfigure}[t]{0.45\textwidth}
  \includegraphics[width=\textwidth]{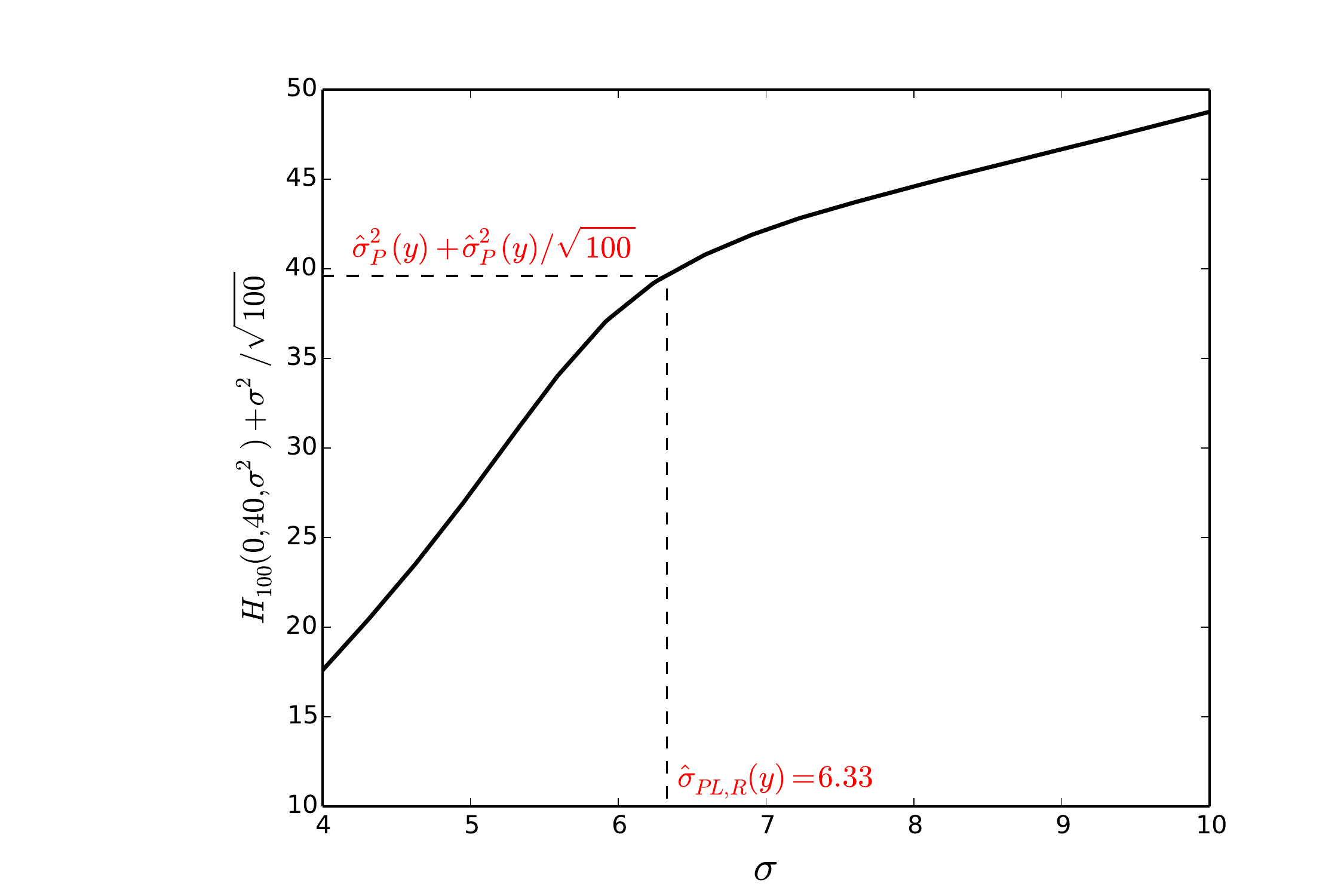}
  \caption{Regularized pseudo-likelihood estimator}
  \label{fig:lassopvalb}
\end{subfigure}
\caption{Estimation of $\sigma^2$ based on the pseudo-likelihood
for $\sigma^2$ under $\M_{(C,b,P)}$ for an interval with $L=0$.
The observed value is $\hat{\sigma}^2_P(y)=36$ on 100 degrees of freedom
truncated to $[0,40]$.
}
\end{figure}

This procedure is illustrated in 
Figure \ref{fig:estimating_sigma}. Note that
for observed values of $\hat{\sigma}^2_P(y)$ near the truncation boundary
the estimate varies quickly with $\hat{\sigma}^2_P(y)$ due to the plateau
at the upper limit.
We remedy this in two simple steps. First, we use a regularized estimate of
$\sigma$ under this pseudo-likelihood. 
 Next, we apply a simple bias correction
to this regularized estimate so that when $[L,U] = [0,\infty)$ we recover
the usual OLS estimator. Specifically, for
some $\theta$ we obtain a new estimator as the root of
$$
\sigma \mapsto H_{\nu}(L, U, \sigma^2) + \theta \cdot \sigma^2 - (1 + \theta) \hat{\sigma}^2_P(y)
$$
We call this regularized pseudo-likelihood estimate $\hat{\sigma}^2_{PL,R}(y)$.
In practice, we have set $\theta=\nu^{-1/2}$ so that this regularization
becomes negligible as the degrees of freedom grows.
The regularized estimate
can be thought of as the MAP from an improper prior on the natural parameter
for $\sigma^2$. In this case, if $\delta=1/(2\sigma^2)$ is the natural parameter
the prior has density proportional to $\delta^{\nu \cdot \theta}$. As
$H_{\nu}(0,\infty, \sigma^2)=\sigma^2$, it is clear that
in the untruncated case we recover the usual OLS estimator $\hat{\sigma}_P(y)$.

\section{Applications}
\label{sec:application}

In this section, we discuss several applications of the inferential tools introduced above.
In Section \ref{sec:compare_estimator}, we compare the performance of the variance estimator
introduced in Section \ref{sec:sigma} with some existing methods. In Section \ref{sec:approx},
we introduce a Gaussian approximation to the truncated $T$-distribution, which is more
computation-friendly. The approximation is
validated through simulations where the coverages of confidence intervals are close to the nominal
levels. Finally, in Section \ref{sec:fdr}, we apply a BHq procedure to the p-values acquired through
the inference above. Although we do not seek to establish any theoretical results, simulations show that a simple
BHq procedure applied to the p-values controls the false discovery rate at the desired level and has
comparable power with existing methods, even in the high-dimensional setting. 

\subsection{Comparison of estimators}
\label{sec:compare_estimator}

As the selection event for the square-root LASSO yields
a law of the form $\M_{(C,b,P)}$, we can study
the accuracy of the estimator by comparing it to other
estimators for $\sigma$ in the LASSO literature.
We compare our estimator, $\hat{\sigma}_{P,L,R}$, to the following:
\begin{itemize}
    \item OLS estimator in the selected model, where $\hat{\sigma} = \frac{\|(I-P_E)y\|}{\sqrt{n-|E|}}$, 
        $E$ is the active set.
    \item Scaled LASSO in \cite{sun2012scaled}.
    \item Minimum cross-validation estimator based upon the residual sum of squares of Lasso solution with $\lambda$ selected by cross validation \cite{reid2013study}. 
\end{itemize}

To illustrate the advantage of our method, we consider the high-dimensional setting.
The design matrices were $1000 \times 2000$ generated from an equicorrelated Gaussian with correlation 0.3, columns normalized to have length 1.
The sparsity was set to $40$ non-zero coefficients each with a signal-to-noise ratio $7$ but with a random sign. The noise level $\sigma=3$ is
considered unknown. The parameter $\kappa$ 
in \eqref{eq:lambda} was set to $0.8$. With these settings, the square-root LASSO ``screened'', or discovered a superset of the
20 non-zero coefficients with a success rate of approximately 30\%. Since we do not screen most of the time, the model
may be sometimes misspecified. But the variance estimator $\hat{\sigma}_{PL}$ is consistent with the model specific
variance $\bar{\sigma}_E$. 

Specifically, the performance of the estimators was evaluated by considering
the ratio 
$\widehat{\sigma}^2(y)/\bar{\sigma}^2_E$ where 
$\bar{\sigma}^2_E$ is the usual estimate of $\sigma^2$ using the selected variables $E$, evaluated on
an independent copy of data drawn from the same distribution. This is the variance one would expect
to see in long run sampling if fitting an OLS model with variables $E$ under the true data generating
distribution.

\begin{figure}
\centering
\begin{subfigure}[ht]{0.45\textwidth}
  \includegraphics[width=\textwidth]{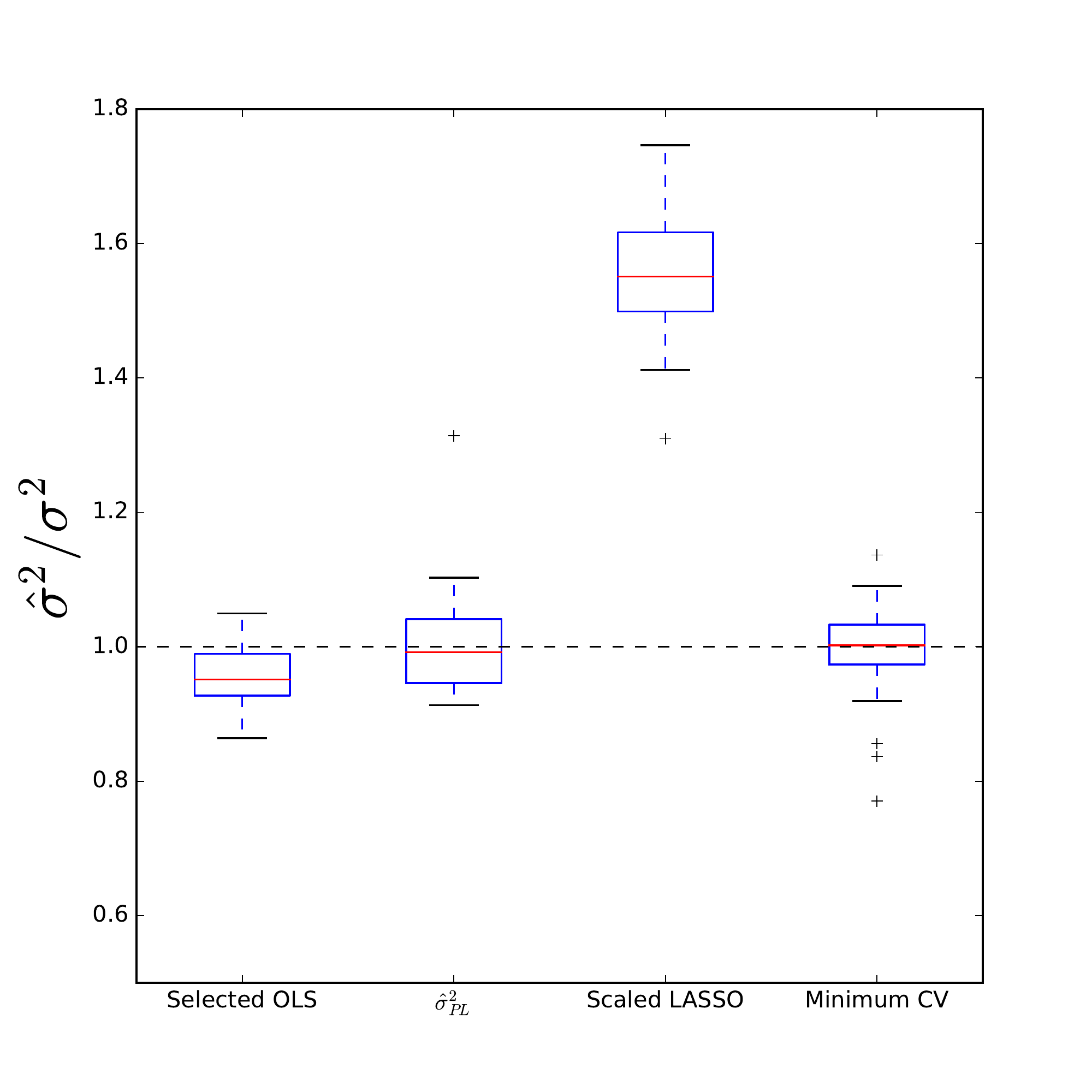}
  \caption{Screening}
  \label{fig:screened}
\end{subfigure}
~ 
\begin{subfigure}[ht]{0.45\textwidth}
  \includegraphics[width=\textwidth]{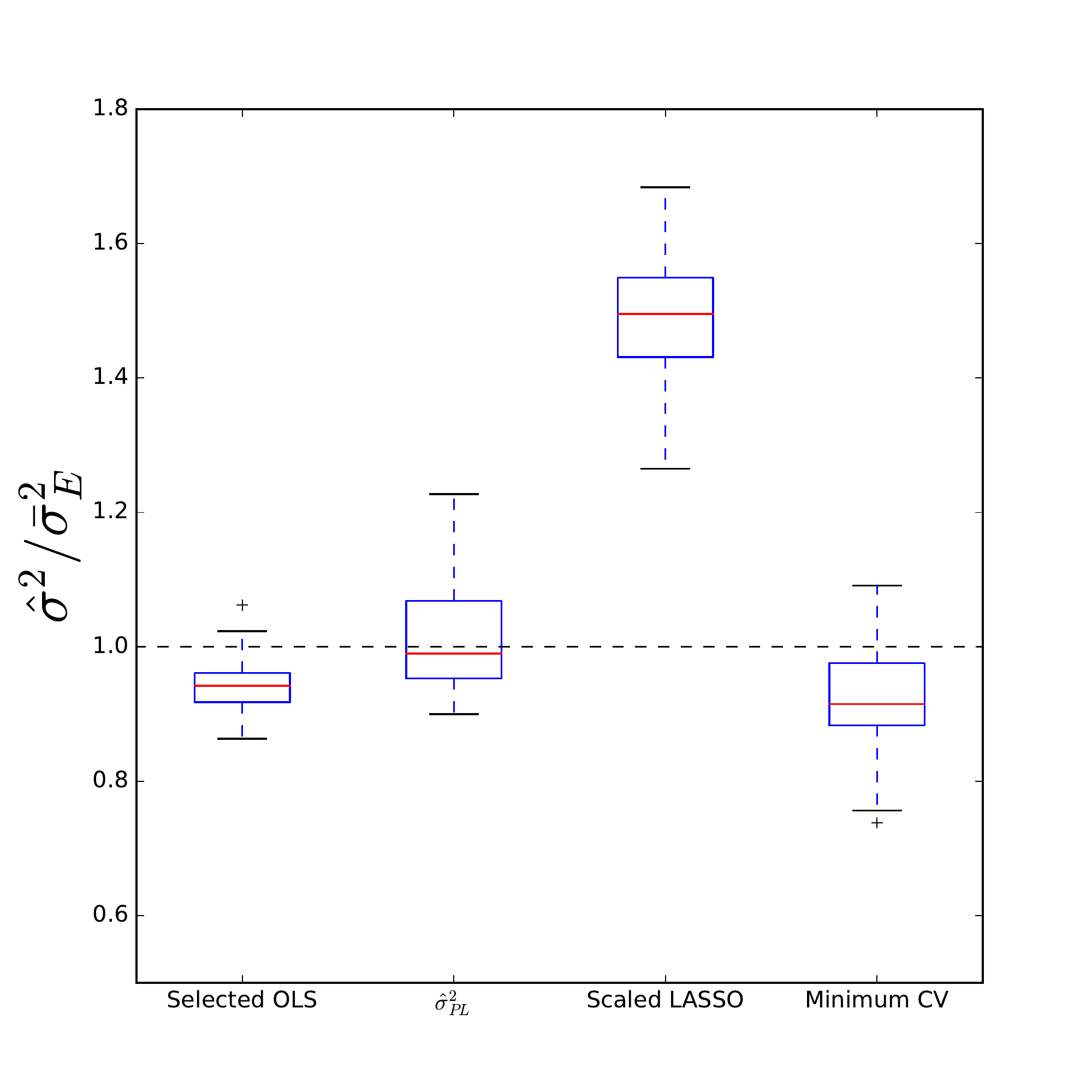}
  \caption{Non-screening}
  \label{fig:unscreened}
\end{subfigure}
\caption{Comparisons of different estimators for $\sigma_E$.
}
\end{figure}

We see from Figure \ref{fig:screened} that our estimator is approximately unbiased when we correctly recovered all variables,
outperforming the OLS estimator and the scaled LASSO estimator. Its performance is comparable with that of Minimum CV, but with
fewer outliers. In the case of partial recovery (non-screening), we see
that the estimator is quite close to the estimator $\bar{\sigma}^2_E$, where part of the ``noise'' in
our selected model comes from the bias in the estimator. Our estimator beats all the other estimators in this case.
Particularly, the Selected OLS estimator consistently underestimates the variance, which might lead to
inflated test scores and more false discoveries. On the other hand, Minimum CV seeks to estimate $\sigma^2$ 
instead of $\bar{\sigma}^2_E$, which results in large downward bias.

In fact, when $(y_i,X_i)$ are independent draws from a fixed Gaussian distribution, then for any $E$, the model $M_{u,E}$ is correctly specified in the sense that the law of $y|X_E$ belongs to the family $M_{u,E}$. In this setting the quantity $\bar{\sigma}^2_E$ is an asymptotically correct estimator $\sigma^2_E = \text{Var}(y_i | x_{i,E})$. Hence,
we see that the pseudo-likelihood estimator may be considered a reasonable estimator when the square-root LASSO does not actually screen.

\subsection{A Gaussian approximation to $\M_{(C,b,P)}$}
\label{sec:approx}

In this section, we introduce a Gaussian approximation to the truncated
$T$-distribution which we use in computation. 

The selective distribution $\mathbb{Q}_{E,z_E}$ derived from some $\P_E \in
M_{u,E}$ is used for inference about the parameters $\beta_{E}$.  Let
$\eta$ be the normalized linear functional for testing $H_0:\beta_{j,E}=\theta$, then for any value of
$\theta$, this distribution is restricted to the sphere of radius $\|y - \theta X_j\|_2$
intersect the affine space $\{z: X_{E \setminus j}^Tz=X_{E \setminus j}^Ty\}$. Call this set
$S(\|P_{E \setminus j}(y - \theta \eta)\|_2, X_{E \setminus j}^Ty)$
 The restriction of $\mathbb{Q}_{E,z_E}$ to $S(\|P_{E \setminus j}(y - \theta \eta)\|_2, X_{E \setminus j}^Ty)$ is 
the law $\P_E$ restricted to $S(\|P_{E \setminus j}(y - \theta \eta), X_{E \setminus j}^Ty)$ intersected with the selection event.
For $|E|$ not large relative to $n$ by the classical 
Poincar\'e's limit \cite{diaconis_freedman}, the law of
$\eta^Ty - \theta$ under $\P_E$ restricted to $S(\|P_{E \setminus j}(y - \theta \eta)\|_2, X_{E \setminus j}^Ty)$
that is close to a Gaussian with variance $\|P_{E \setminus j}(y - \theta \eta)\|_2^2 / (n - |E|+1)$. 
Thus, we might approximate its distribution under $\Q_{E,z_E}$ by a truncated Gaussian. 
Furthermore, we can approximate the variance by $\hat{\sigma}_P(y)^2$. We summarize the Gaussian approximation
in the following remark.

\begin{remark}[Approximate distribution]
\label{rem:approximation}
Suppose we are interested in testing the hypothesis $H_0:\eta^T\mu=\theta$ in the family $\M_{(C,b,P)}$
for some $\eta \in \text{row}(C)$. We propose using the distribution
$$
{\cal L}(\eta^Tz - \theta | (P - \eta\eta^T)z, Cz \leq \widehat{\sigma}_P(y) b), \qquad z \sim N(\mu, \hat{\sigma}^2(y) I).
$$
We condition on $(P - \eta\eta^T)z$ as we have assumed $P\mu=\mu$ in defining $\M_{(C,b,P)}$ and this is a sufficient
statistic for the unknown parameter $(P-\eta\eta^T)\mu$.
\end{remark}

To validate the approximation, we use a similar simulation scenario to the one in Section 7 of \cite{optimal_selective}.
We generate rows of the design matrix $\bX_{150 \times 200}$ from an equicorrelated multivariate Gaussian distribution with pairwise correlation $\rho=0.3$ between the variables. The columns are normalized to
have length 1. The sparsity level is 10, with each non-zero coefficient having value 6. Results are shown in Table~\ref{tab:coverage}.

\begin{table}[ht]
\centering
\begin{tabular}{rr}
\toprule
 Level &  Coverage \\
\midrule
 0.850 &     0.860 \\
 0.900 &     0.905 \\
 0.950 &     0.947 \\
 0.970 &     0.968 \\
\bottomrule
\end{tabular}

\caption{Coverage of confidence intervals using Gaussian approximation based on forming 10000 intervals.}
\label{tab:coverage}
\end{table}

\subsection{Regression diagnostics}

Recall the scaled residual vector $U_{-E}(y)$ in \eqref{eq:ancillary} is ancillary under the laws $\Pp_E$  and $\tilde{\Q}_{E,z_E}$. $U_{-E}(y)$ follows a uniform distribution on the $n$-dimensional unit sphere intersecting the subspace determined by $I-P_E$, truncated by the observed constraints \eqref{eq:inactive2}. 
As $U_{-E}(y)$ is ancillary, we can sample from its distribution to carry out any regression diagnostics or goodness of fit tests.
For a specific example, we might consider the observed maximum of the residuals $\|\hat{U}_{-E}(y)\|_{\infty}$.

Another natural regression diagnostic might be to test whether individual or groups of variables not selected improve the fit. Specifically, suppose
$G$ is a subset of variables disjoint from $E$. Then, the usual $F$ statistic for including these variables in the model 
is measurable with respect to $U_{-E}$:
$$
\begin{aligned}
F_{G|G \cup E}(y) &= \frac{\|(P_{G \cup E} - P_E)y\|^2_2 / |G|}{\|(I - P_{G \cup E})y\|^2_2 / (n-|G \cup E|)} \\
&= \frac{\|(P_{G \cup E} - P_E)U_{-E}(y)\|^2_2 / |G|}{\|(I - P_{G \cup E})U_{-E}(y)\|^2_2 / (n-|G \cup E|)}.
\end{aligned}
$$
Therefore, a selectively valid test of 
$$
H_0: \beta_{G|G \cup E} = 0
$$
can be constructed by sampling $U_{-E}$ under its null distribution and comparing the observed $F$ statistic to this reference distribution. Details of these diagnostics are a potential area of further work.

\subsection{Applications to FDR control}
\label{sec:fdr}

Based on the truncated-$t$ distribution derived in Theorem \ref{thm:truncated:t}, 
it is easy to construct tests that control the selective Type I error \eqref{eq:selective_type_1_error}.
In this section, we attempt to use our $p$-values for multiple hypothesis testing purposes.
We apply the BHq procedure \citep{benjamini1995controlling} to the p-values and compare with
the procedure proposed by \cite{barber2016knockoff}. To ensure the fairness of the comparison,
we generate the data according to the data generation mechanism in Section 5 of \cite{barber2016knockoff},
where $n=2000,~p=2500,~k=30$, and $X$ is generated as random Gaussian design with correlation $\rho$.
In the simulations, we vary $\kappa$ in \eqref{eq:lambda} and also include the choice of $\lambda$
according to the $1$ standard error rule for reference.
The comparison is in Figure \ref{fig:fdr} below. We see that first our procedure is relatively robust to
the choice of $\kappa$ in both FDR control and power across different correlations $\rho$. Secondly,
for all choices of $\kappa$, our procedure controls the FDR at $0.2$ across all the correlation coefficients $\rho$.
Moreover, it enjoys an approximately $20 \%$ increase in power 
compared with Figure 1 in \cite{barber2016knockoff}. 

\begin{figure}
  \caption{FDR control and power for square-root Lasso across different correlations $\rho$.} 
  \label{fig:fdr}
    \includegraphics[width=\textwidth]{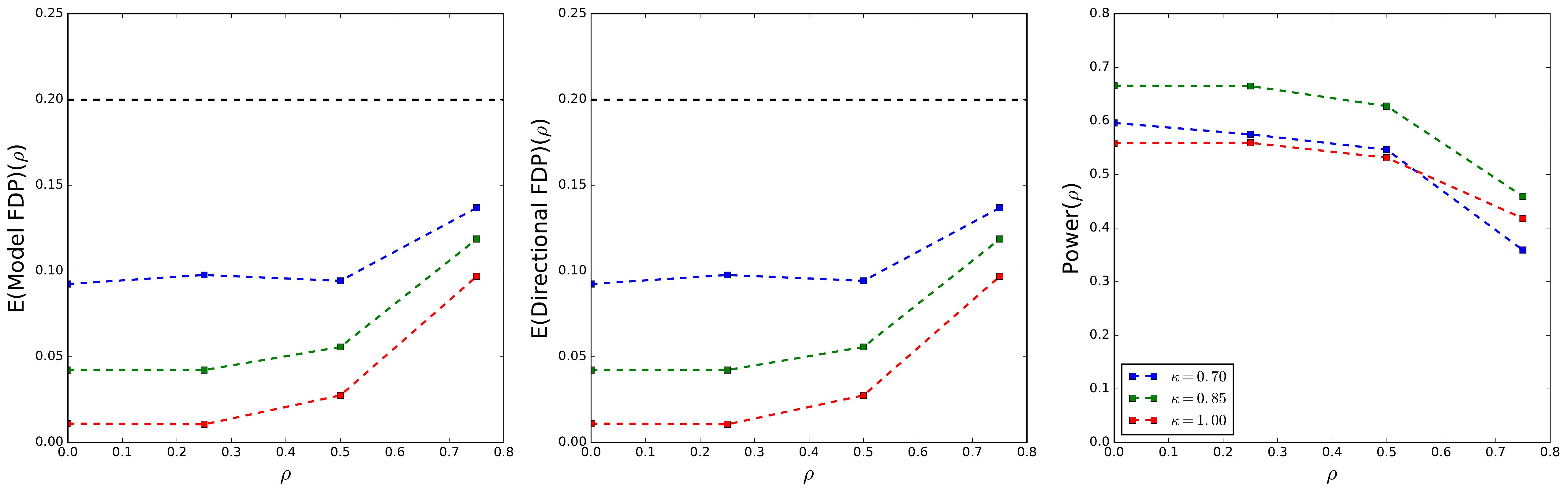}
\end{figure}

\bibliographystyle{agsm}
\bibliography{paper}

\end{document}